\numberwithin{equation}{section}
\newcommand{\ucex}{\tilde{u}_c}
\newcommand{\cex}{\tilde{C}}
\newcommand{\omex}{\tilde{\Omega}}
\newcommand{\alex}{\tilde{\alpha}}
\newcommand{\vecfd}{{\bf F}}
\def\delx{\ensuremath\dfrac{\partial}{\partial x}}
\title{Numerical solution of the two-phase tumour growth model with moving boundary}
\author{Gopikrishnan C. Remesan}
\address{IITB-Monash Research Academy,\,IIT Bombay,\,Powai,\,Mumbai 400076,\,INDIA}
\date{\today}
\begin{document}

\maketitle

\begin{abstract}
A novel numerical technique has been proposed to solve a two-phase tumour growth model in one spatial dimension without needing to account for the boundary dynamics explicitly. The equivalence to the standard definition of a weak solution is proved. The method is tested against equations with analytically known solutions, to illustrate the advantages over the existing techniques. The tumour growth model is solved using the new procedure and showed to be consistent with results available in the literature. 
\end{abstract}

\tableofcontents

\section{Introduction}
\label{sec:intro}
We consider the tumour growth model presented in the seminal paper by Breward \emph{et al.}~\cite{breward_2002}. The partial differential equations are defined in a time-dependent one-dimensional spatial domain. Such systems generally account for higher spatial dimensional models reduced to a single spatial dimension by symmetry
arguments~\cite{byrne_2003}. In the current model, tumour cells and surrounding fluid medium are considered as two distinct, actively interacting phases. The cell phase is viscous with viscosity $\mu$, and the fluid phase is inviscid. 

Let $\Omega(t) = (0,\ell(t))$ be the interval representing the tumour where $\ell(t)$ is the tumour radius at time $t$.  Set $D_T := \cup_{0 < t <T}\{t\} \times \Omega(t)$ and $B_T := \partial D_T \backslash \left(\{T\} \times \Omega(T) \cup \{0\} \times \Omega(0) \cup [0,T]\times \{0\}\right)$ (Figure~\ref{fig:geometry}). $B_T$ is assumed to be of class $\mathscr{C}^1$~\cite[p.~627]{EvansPDE}. The model seeks the variables $\alpha,u_c$ and $C$ that denote volume fraction of the tumour cells, velocity of the tumour cells and oxygen tension, respectively such that the following hold in $D_T$:
\begin{subequations}
\label{system:nd_sys}
\begin{align}
\label{eqn:vol_fraction}
\dfrac{\partial \alpha}{\partial t} + \delx (u_c \alpha) &= \alpha f(\alpha,C), \\
\label{eqn:cel_velocity}
\dfrac{k u_c \alpha}{1 - \alpha} - \mu \dfrac{\partial}{\partial x} \left(\alpha \dfrac{\partial u_c}{\partial x}\right) &= -\dfrac{\partial}{\partial x} \left( \alpha \dfrac{\alpha - \alpha^{*}}{(1 - \alpha)^2} H(\alpha - \alpha_{min})\right), \\
\label{eqn:oxygen_tension}
\dfrac{\partial C}{\partial t} - \dfrac{\partial^2 C}{\partial x^2} &= -\dfrac{Q \alpha C}{1 + \hat{Q_1} C},
\end{align}
where $f(\alpha,C) = \frac{(1 + s_1)(1 - \alpha)C}{1 + s_1 C} - \frac{s_2 + s_3 C}{1 + s_4 C}$. The positive constants $k$ (drag coefficient) controls the drag between the phases; $\alpha^{\ast}$ and $\alpha_{\text{min}}$, the stress in the cellular phase; $s_1$, $s_2$, $s_3$ and $s_4$, birth and death rates. The Heaviside function $H(x) = 1$ if $x \geq 0$ and zero otherwise. The two-phase model in Breward \emph{et al.}~\cite{breward_2002} uses a quasi-steady state assumption for oxygen tension which is relaxed in this study. This means the explicit temporal variation of oxygen tension is considered which makes it a parabolic equation ~\eqref{eqn:oxygen_tension}.  The initial and boundary conditions are
\begin{gather}
\label{eqn:initial_cond}
\alpha(0,x) = \alpha_0(x),\;\;C(0,x) = C_0(x)\;\;\forall x \in \Omega(0), \\
\label{eqn:bdr_cond_1}
u_c(t,0) = 0,\;\mu \dfrac{\partial u_c}{\partial x}(t,\ell(t)) = \dfrac{\alpha(t,\ell(t))  -\alpha_{\text{min}}}{(1 - \alpha(t,\ell(t)))^2} H(\alpha(t,\ell(t)) - \alpha_{\text{min}}),\\
\label{eqn:bdr_cond_2}
\dfrac{\partial C}{\partial x}(t,0) = 0,\;C(t,\ell(t)) = 1 \quad \forall t \in (0,T), \\
\label{eqn:bd_velcoity}
\ell'(t) = u_c(t,\ell(t))\;\;\forall t,\;\; \ell(0) = \ell_0.
\end{gather}
Here, $\alpha_0(x)$ satisfies $0 < m_{\alpha} \leq \alpha_0(x) \leq M_{\alpha} < 1$ for every $x \in \Omega(0)$.
\label{system:nd_system}  
\end{subequations}

\begin{figure}
\centering
\includegraphics[scale=0.15]{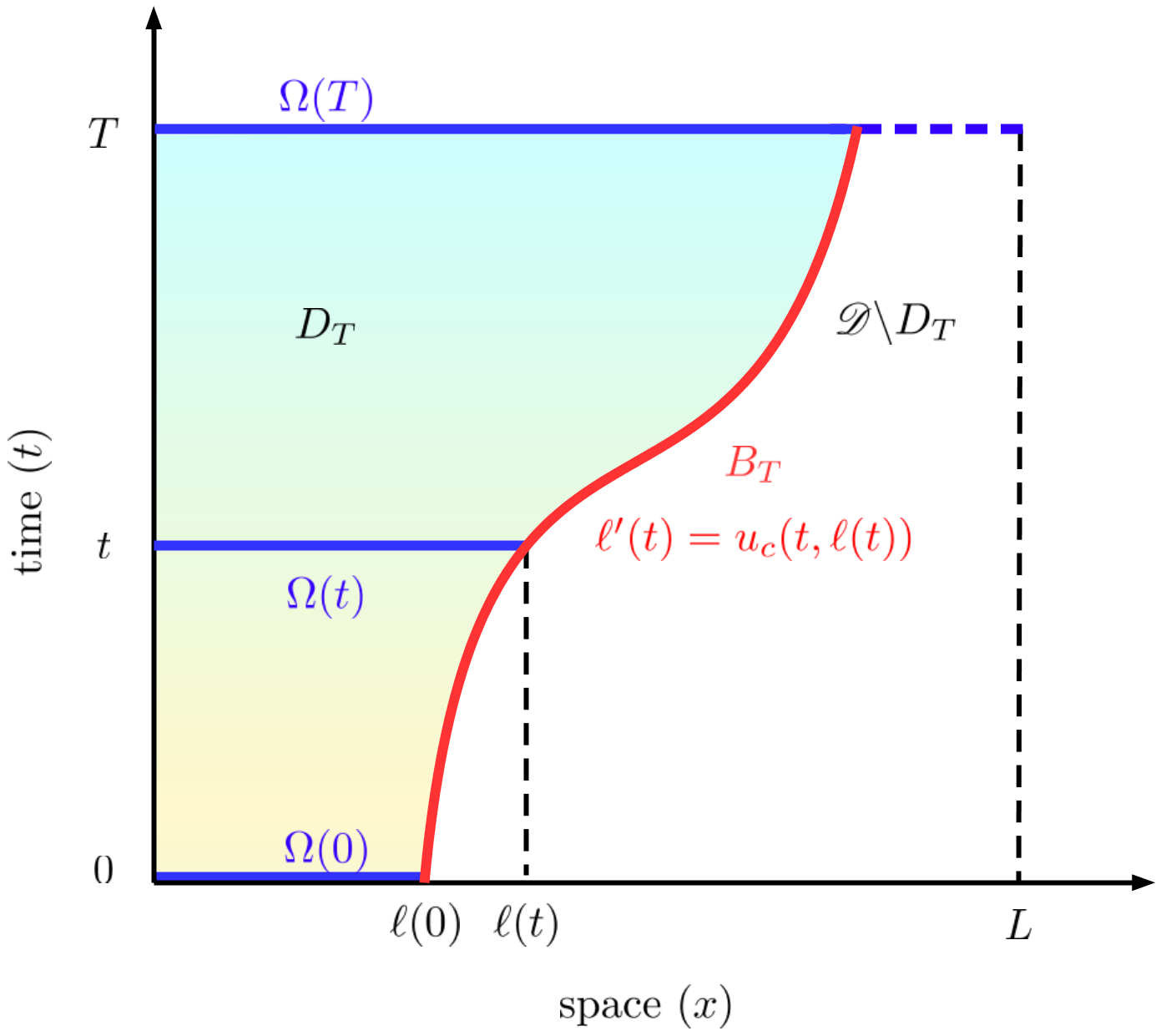}
\caption{The rectangle $(0,T) \times (0,L)$ is the time-independent domain $\mathscr{D}$. The region to the left of the red curve $B_T$ is $D_T$\,;\;to the right, $\mathscr{D}\backslash D_T$. }
\label{fig:geometry}
\end{figure}

The standard method to solve the system of equations of the form~\eqref{eqn:vol_fraction}-\eqref{eqn:bd_velcoity} is to transform the domain $\Omega(t)$ into a fixed interval using suitable change of
variables~\cite{breward_2003,ward_1,ward_2}. An inverse transform is then applied to obtain the solution in the moving domain. Even though this method is commonly adopted, it comes with significant drawbacks. 

Firstly, the change of variable is computable only in a few cases where the geometry of the problem is simple enough. This is even harder in 2D and 3D domains. Secondly, for the clear choice of $x \rightarrow \xi := x/\ell(t)$ in 1D case, the discretisation error is proportional to $\ell(t) \Delta \xi$. An alternative choice is to discretise $(0,\ell(t))$ to apply  numerical schemes. In this method re-meshing needs to be done at each time step which may become computationally expensive. 

In this article, a new numerical technique that overcomes theses disadvantages is introduced. These are done by presenting the notion of solutions on a larger domain which contains all the time-dependent domains $\Omega(t)$ for a finite time. This domain, referred to as the \emph{extended domain}, is time-independent and requires only one initial spatial discretisation; thereby avoiding the need to re-mesh. Also, the discretisation error becomes free from the dependence on $\ell(t)$.

This paper is organised as follows. In Section~\ref{sec:extended_model}, a novel method that is referred to as \emph{extended model} in the rest of the paper is introduced and its equivalence to the standard model is proved. In Section~\ref{sec:num_experiments}, the numerical technique is developed and the results are presented. The results are compared with a model problem for which analytic solutions are known. The effect of parameters in the new method is also investigated. The extended version is solved using the numerical technique developed and compared with results available from the literature. The paper ends with a conclusion in Section~\ref{sec:conclusion}.

\section{Extended model}
\label{sec:extended_model}
The notion of weak solutions in the given domain and extended domain are presented in this section. The solutions for the extended and original version are proved to be equivalent. For $p \in [1,\infty]$ and a family of domains $\{\Omega(t)\}_{0 \leq t \leq T}$, define

\begin{eqnarray*}
    \mathbb{L}^{p}\left(0,T;H^1(\Omega(t))\right) & := &\left\{v : [0,T]\times\mathbb{R} \rightarrow \mathbb{R} \;\middle\vert\; v(\cdot,t) \in H^1(\Omega(t)), \right. \\
    & &   \left. \forall t \in [0,T],\,\middle\vert\middle\vert\, ||v(\cdot,t)||_{H^1}\,\middle\vert\middle\vert_{L^p(0,T)} < \infty\right\}.
\end{eqnarray*}

  Multiply~\eqref{eqn:vol_fraction} by a test function $\phi \in \mathscr{C}_c^{\infty}\left(\overline{D_T}\backslash \left( \{T\} \times \Omega(T)\right)\right)$ and apply integration by parts. A use of~\eqref{eqn:bd_velcoity} and~\eqref{eqn:initial_cond} yields
\begin{gather}
\int_{D_T} \left( \alpha f(\alpha, C)\phi + (\alpha, u_c \alpha)\cdot \nabla_{t,x}\phi \right) \,\mathrm{d}t\,\mathrm{d}x +  \int_{\Omega(0)} \phi(0,x)\alpha_0(x)\,\mathrm{d}x  = 0. 
\label{eqn:alpha_weakform}
\end{gather}
This constitutes the weak formulation of the hyperbolic conservation law. The weak solutions of the problem in \ref{system:nd_system} and in the extended model are defined next.  Firstly, we give the definition of the solution in the domain $D_T$. \\

\begin{definition}[Weak solution I] 
By a weak solution of the system~\eqref{system:nd_system} in $D_T$ we mean a 4-tuple $(\alpha,u_c,C,\Omega)$ such that $0 < \overline{m}_{\alpha}  \leq \alpha \leq \overline{M}_{\alpha} < 1$, $C \geq 0$ and
\begin{itemize}
\item[1.]  $\alpha \in L^{\infty}(D_T)$ satisfies \eqref{eqn:alpha_weakform} for every $\phi \in \mathscr{C}_c^{\infty}\left(\overline{D_T}\backslash  \left( \{T\} \times \Omega(T) \right) \right)$.
\item[2.] $u_c \in \mathbb{L}^\infty(0,T;H^1(\Omega(t)))$ with $u\vert_{x=0} = 0$ and $C \in \mathbb{L}^2(0,T;H^1(\Omega(t)))$ with $C\vert_{x=\ell(t)} = 1$ are solutions of~\eqref{eqn:cel_velocity} and~\eqref{eqn:oxygen_tension} in the sense of distributions. 
\item[3.] The domain $\Omega(t)$ is the open interval $(0,\ell(t))$ where $\ell(t)$ is governed by~\eqref{eqn:bd_velcoity}.
\end{itemize}
\label{def:weak_1}
\end{definition}
The definition of the weak solution in the domain $\mathscr{D}$ is given next. $\mathscr{D}$ is the extended domain given by $(0,T) \times (0,L)$ (Figure~\ref{fig:geometry}) where $L$ is chosen such that $\ell(t) < L$ for every $t \leq T$. \\

\begin{definition}[Weak solution II] 
By a weak solution of the system~\eqref{system:nd_system} in the extended domain $\mathscr{D}$ (Figure~\ref{fig:geometry}) we mean a 4-tuple $(\alex,\ucex,\cex,\omex)$ such that $0 < \overline{m}_{\alpha} \leq \alex\vert_{\omex} \leq \overline{M}_{\alpha} < 1$, $\tilde{C} \geq 0$ and
\begin{itemize}
\item[1.] $\alex \in L^{\infty}(\mathscr{D})$ such that for every $\tilde{\phi} \in \mathscr{C}_c^{\infty} ([0,T)\times (0,L))$ 
\begin{equation}
\hspace{-1cm}\int_{\mathscr{D}} \left( \alex f(\alex,\cex)\tilde{\phi} + (\alex,\ucex \alex).\nabla_{t,x}\tilde{\phi}\right)\,\mathrm{d}t\,\mathrm{d}x + \int_{\Omega(0)} \tilde{\phi}(0,x)\alpha_{0}(x)\,\mathrm{d}x  = 0.
\label{eqn:weakform_alphaex}
\end{equation}
\item[2.] For a fixed $t$, $\omex(t) := \{ x : \alex(t,x) > 0 \}$, $\ucex = 0$, $\cex = 1$ on $(0,L) \backslash \omex(t)$. Define $\tilde{D}_T := \cup_{0 < t <T}\{t\} \times \omex(t)$.
\item[3.] $\ucex \in L^\infty(\mathscr{D})$ with $u_c := \ucex\vert_{\tilde{D}_T }$, $u_c \in \mathbb{L}^\infty(0,T;H^1(\tilde{\Omega}(t)))$ and, $\cex \in L^2(\mathscr{D})$ such that $C := \cex\vert_{\tilde{D}_T} \in \mathbb{L}^2( 0,T;H^1(\omex(t)))$ are solutions of~\eqref{eqn:cel_velocity} and~\eqref{eqn:oxygen_tension} in the sense of distributions.
\end{itemize}
\label{def:weak_2}
\end{definition}

\begin{theorem}
If $(\alpha,u_c,C,\Omega)$ is a weak solution I, then $(\alex,\ucex,\cex,\omex)$ defined by $\alex := \alpha, \ucex := u_c$ and $\cex := C$ in $D_T$ and, $\alex := 0,\ucex := 0,\cex := 1$ in $\mathscr{D}\backslash D_T$ with $\omex(t) := \Omega(t)$ is weak solution II. Conversely, if $(\alex,\ucex,\cex,\omex)$ is a weak solution II, then $(\alpha,u_c,C,\Omega)$ with $\Omega = \omex$ and $\alpha := \alex\vert_{\tilde{D}_T},u_c := \ucex\vert_{\tilde{D}_T}$ and $C := \cex\vert_{\tilde{D}_T}$ is a weak solution I. 
\label{thm:equivalence_thm}
\end{theorem}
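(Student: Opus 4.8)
The plan is to prove the two implications separately, in each case transferring the three defining items of one notion of weak solution to the other. The organising observation is that, because $\alex$ is extended by zero outside $D_T$, every integrand in~\eqref{eqn:weakform_alphaex} carries a factor of $\alex$---either $\alex f(\alex,\cex)\tilde\phi$, or the two components $\alex$ and $\ucex\alex$ of the space--time flux in $(\alex,\ucex\alex)\cdot\nabla_{t,x}\tilde\phi$---and therefore vanishes identically on $\mathscr{D}\backslash\tilde{D}_T$. Consequently $\int_{\mathscr{D}}$ collapses to $\int_{\tilde{D}_T}$, and the whole matter reduces to matching admissible test functions and to recovering the two boundary conditions~\eqref{eqn:bdr_cond_1} and~\eqref{eqn:bd_velcoity} that are silently encoded in~\eqref{eqn:alpha_weakform}.

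For the forward implication (weak solution I $\Rightarrow$ weak solution II), given $\tilde\phi\in\mathscr{C}_c^\infty([0,T)\times(0,L))$ I would restrict it to $\overline{D_T}$: the restriction vanishes near $\{T\}\times\Omega(T)$, hence lies in $\mathscr{C}_c^\infty(\overline{D_T}\backslash(\{T\}\times\Omega(T)))$ and is admissible in~\eqref{eqn:alpha_weakform}. Since the flux vanishes off $D_T$, equation~\eqref{eqn:weakform_alphaex} for $\tilde\phi$ is literally~\eqref{eqn:alpha_weakform} for $\tilde\phi\vert_{\overline{D_T}}$, giving item~1 of Definition~\ref{def:weak_2}. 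Item~2 is immediate: on $\Omega(t)$ one has $\alex=\alpha\ge\overline{m}_\alpha>0$ while $\alex=0$ elsewhere, so $\{x:\alex(t,x)>0\}=\Omega(t)$ and $\tilde{D}_T=D_T$, and $\ucex=0$, $\cex=1$ outside by construction. Item~3 is inherited by restriction, the extension by the constant $1$ keeping $\cex\in L^2(\mathscr{D})$ and being compatible with $C\vert_{x=\ell(t)}=1$, and the extension by $0$ keeping $\ucex\in L^\infty(\mathscr{D})$ despite its jump across $B_T$.

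For the converse the work lies in recovering~\eqref{eqn:alpha_weakform} and the kinematic law~\eqref{eqn:bd_velcoity} from~\eqref{eqn:weakform_alphaex}, whose test functions are supported away from $x=0$. Given any $\phi$ admissible in~\eqref{eqn:alpha_weakform}, I would extend it smoothly across $B_T$ to $\hat\phi$ on $\mathscr{D}$ and test~\eqref{eqn:weakform_alphaex} against $\chi_\varepsilon\hat\phi$, where $\chi_\varepsilon(x)$ is a cutoff vanishing for $x<\varepsilon$ and equal to $1$ for $x>2\varepsilon$, so that $\chi_\varepsilon\hat\phi\in\mathscr{C}_c^\infty([0,T)\times(0,L))$. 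Letting $\varepsilon\to0$, the only new contribution comes from $\chi_\varepsilon'$ in the $x$-flux and tends to $\int_0^T u_c(t,0)\alpha(t,0)\phi(t,0)\,\mathrm{d}t$; this vanishes because solving~\eqref{eqn:cel_velocity} carries the Dirichlet datum $u_c(t,0)=0$ of~\eqref{eqn:bdr_cond_1}, and the surviving terms reassemble into~\eqref{eqn:alpha_weakform}. Separately, reading~\eqref{eqn:weakform_alphaex} as the assertion that $\alex$ is a bounded distributional solution of $\partial_t\alex+\partial_x(\ucex\alex)=\alex f(\alex,\cex)$ on $\mathscr{D}$, I would localise at the interface $\Gamma=\{x=\ell(t)\}=\partial\tilde{D}_T\cap\{0<x<L\}$ and read off the Rankine--Hugoniot relation $\ell'(t)\,[\![\alex]\!]=[\![\ucex\alex]\!]$; with one-sided limits $\alex\to\alpha(t,\ell(t))>0$ and $\ucex\to u_c(t,\ell(t))$ from inside and $0$ from outside, this collapses to $\ell'(t)=u_c(t,\ell(t))$, which is exactly~\eqref{eqn:bd_velcoity}, yielding item~3 of Definition~\ref{def:weak_1}; items~1 and~2 (including $u_c\vert_{x=0}=0$ and $C\vert_{x=\ell(t)}=1$) follow as above and by restriction.

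I expect the Rankine--Hugoniot step to be the main obstacle. It presupposes that the one-sided traces $\alpha(t,\ell(t))$ and $u_c(t,\ell(t))$ exist---the latter from the one-dimensional embedding $H^1(\omex(t))\hookrightarrow C^0$, the former from the transport structure of $\alex$---and that $\Gamma$ is regular enough for $\ell'(t)$ and the interface speed to be defined, which is precisely where the standing hypothesis $B_T\in\mathscr{C}^1$ is used. The accompanying $\varepsilon\to0$ computation at $x=0$ is routine once the trace of $\ucex\alex$ there is controlled, and the oxygen condition $C\vert_{x=\ell(t)}=1$ follows from continuity of the parabolic field $\cex$ across $\Gamma$ matching the exterior value $1$ fixed in item~2 of Definition~\ref{def:weak_2}.
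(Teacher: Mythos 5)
Your proposal is correct and follows essentially the same route as the paper: the forward direction is the identical restriction-plus-zero-extension argument, and your Rankine--Hugoniot step at the interface is exactly the paper's argument that $\vecfd=(\alex,\ucex\alex)$ has weak divergence in $L^2(\mathscr{D})$ and hence continuous normal flux across $B_T$, so the vanishing exterior trace forces $(\alpha,u_c\alpha)\cdot\mathbf{n}_{B_T}=0$, i.e.\ $\ell'(t)=u_c(t,\ell(t))$. Your cutoff $\chi_{\varepsilon}$ near $x=0$ (using $u_c(t,0)=0$ to kill the limiting flux term) is only a more careful rendering of the paper's terser step of extending $\phi$ to $\tilde{\phi}\in\mathscr{C}_c^{\infty}([0,T)\times(0,L))$ with $\tilde{\phi}=\phi$ on $D_T$, which as written overlooks test functions that do not vanish near $x=0$.
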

\begin{proof}
Let $(\alpha,u_c,C,\Omega)$ be a weak solution I and $\tilde{\phi} \in \mathscr{C}_c^{\infty} ([0,T)\times (0,L))$. Since $\tilde{\phi}\vert_{D_T} \in \mathscr{C}_c^{\infty}\left(\overline{D_T}\backslash  \left( \{T\} \times \Omega(T)\right)\right)$, ~\eqref{eqn:alpha_weakform} holds true. Let $\alex = \alpha$ in $D_T$ and $\alex = 0$ in $\mathscr{D}\backslash D_T$. Then a use of the definitions of $\ucex$ and $\cex$ yields 
\begin{align}
\label{eqn:eqweakform_dt}
\int_{D_T} \left( \alex f(\alex, \cex)\tilde{\phi} + (\alex, \ucex \alex)\cdot \nabla_{t,x}\tilde{\phi}\right) \,\mathrm{d}t\,\mathrm{d}x + \int_{\Omega(0)} \tilde{\phi}(0,x)\alpha_0(x)\,\mathrm{d}x  &= 0, \\
\label{eqn:eqweakform_dtex}
\int_{\mathscr{D}\backslash D_T} (\alex,\ucex\alex)\cdot \nabla_{t,x} \tilde{\phi}\,\,\mathrm{d}t\,\mathrm{d}x + \int_{\mathscr{D}\backslash D_T} \alex f(\alex,\cex)\tilde{\phi}\,\mathrm{d}t\,\mathrm{d}x &= 0.
\end{align}
Add \eqref{eqn:eqweakform_dt} and \eqref{eqn:eqweakform_dtex} to obtain
\begin{equation}
\int_{\mathscr{D}}\left(  \alex f(\alex, \cex)\tilde{\phi} +  (\alex, \ucex \alex)\cdot \nabla_{t,x}\tilde{\phi}\right) \,\mathrm{d}t\,\mathrm{d}x + \int_{\Omega(0)} \tilde{\phi}(0,x)\alpha_0(x)\,\mathrm{d}x  = 0.
\end{equation}
Therefore \eqref{eqn:weakform_alphaex} holds true. The conditions on $\ucex$ and $\cex$ follow naturally from the definition~\ref{def:weak_2}.  Since $\alex > 0$ in $D_T$ and $\alex = 0$ in $\mathscr{D}\backslash D_T$, $\omex(t) = \Omega(t)$ for every $t \in [0,T)$. Therefore $(\alex,\ucex,\cex,\omex)$ is a weak solution II.

Conversely, assume that $(\alex,\ucex,\cex,\omex)$ is a weak solution II. Let $\phi \in \mathscr{C}_c^{\infty}\left(\overline{D_T}\backslash  \left( \{T\} \times \Omega(T)\right)\right)$. Define $\tilde{\phi} \in \mathscr{C}_c^{\infty} ([0,T)\times (0,L))$ such that $\tilde{\phi} = \phi$ in $D_T$. Since $\Omega(t) = \omex(t)$ for every $t$, $\alex = 0$ in $\mathscr{D}\backslash D_T$. Using this in~\eqref{eqn:weakform_alphaex} we obtain~\eqref{eqn:alpha_weakform}. We shall recover~\eqref{eqn:bd_velcoity} next. For this define a vector field $\mathbf{F} : \mathscr{D} \rightarrow \mathbb{R}^2$ by $\mathbf{F}(t,x) := (\alex, \ucex \alex).$  We set $\vecfd\vert_{B_T^{+}} = (\vecfd\vert_{D_{T}})\vert_{B_T}$ and $\vecfd\vert_{B_T^{-}} = (\vecfd\vert_{\mathscr{D}\backslash D_{T}})\vert_{B_T}$. Since the weak divergence of the vector field $\vecfd$ is -$\alex f(\alex,\cex) \in L^2(\mathscr{D})$, the flux of $\vecfd$ is continuous across $B_T$. Since $\alex = 0$ in $\mathscr{D}\backslash D_T$, $\vecfd\vert_{B_T^{-}} = {\bf 0}$. Therefore, $(\vecfd\vert_{B_T^{+}} - \vecfd\vert_{B_T^{-}})\cdot{\bf n}_{B_T} = (\alpha,u_c\alpha)\cdot {\bf n}_{B_T} = 0$ where ${\bf n}_{B_T}$ is the normal to $B_T$ given by $\left(\left\vert \ell'(t) \right\vert^2 + 1 \right)^{-1/2}\left(-\ell'(t),1\right)$. This gives $(\alpha,u_c\alpha)\cdot {\bf n}_{B_T} = 0$. Since $\alpha > 0$, $\ell'(t) = u_c(t,\ell(t))$. The conditions on $u_c$ and $C$ follows directly from the definitions.  Therefore $(\alpha,u_c,C,\Omega)$  is a weak solution I. 

This completes the proof of the equivalence between the solutions.
\end{proof}
\section{Numerical experiments}
\label{sec:num_experiments}
By Theorem~\ref{thm:equivalence_thm} it is enough to solve~\eqref{eqn:vol_fraction} in the extended domain $(0,L)$.
Equation~\eqref{eqn:vol_fraction} is solved using cell-centred finite volume methods. In particular, we use two methods to solve the volume fraction equation: upwinding with Godunov flux~\cite[p.~135]{eymard} (method U), and MUSCL with Godunov flux~\cite[p.~146]{eymard} (method M). The uniform space and time discretisations are 
$0 = x_0 < x_1 < \cdots < x_i < \cdots < x_M = L$, $0 = t_0 < t_2 < \cdots < t_j < \cdots < t_N = T$ with $h = x_{i+1} - x_{i}$ and $\Delta t = t_{j+1} - t_{j}$. The right hand side boundary $\ell(t)$ is approximated by $\ell_{h}(t) = \min_{x} \{ x :  \alex< \alpha_{\text{thr}}\;\text{ on } (x,L)\}$ where $\alpha_{\text{thr}}$ is a small positive number. Define $\tilde{\alpha} := 0$  for $x \geq \ell_{h}(t)$ to eliminate the error caused by small positive values of $\tilde{\alpha}$  (created by numerical diffusion) in there. Equations~\eqref{eqn:cel_velocity} and~\eqref{eqn:oxygen_tension} are solved using conforming $P_1$ finite element method (FEM) in space and forward finite difference in time, in the reconstructed domain $(0,\ell_{h}(t))$. This procedure, referred to as scheme A in the rest of the paper, is outlined below. 

\begin{itemize}
\item[1.] Start at $t_0 = 0$. Solve $\tilde{u}_{ch}^0$ using $\alex_{h}^0$ (initial condition).
\end{itemize}
For $j = 1$ to $N$,\;$t_j = t_{j-1} + \Delta t$.
\begin{itemize}
\item[2.] $\ell_{h}^j = \displaystyle\min\limits_{x_i} \{ x_i :  \alex_h^{j-1} < \alpha_{\text{thr}}\;\text{ on } (x_i,L)\}$. 
\item[3.] Find $\tilde{u}_{ch}^j\;\&\;\tilde{C}_h^j$ in $(0,\ell_{h}^j)$ ($P_1$ conforming FEM).
\item[4.] Extrapolate $\tilde{u}_{ch}^j = 0\;\&\;\tilde{C}_h^j = 1$ to $(\ell_{h}^j,L)$.
\item[5.] Find $\alex_h^{j}$ in $(0,L)$ (method U or M).
\end{itemize}

The complete elimination of re-meshing and applicability in higher dimensions are the major advantages of this scheme.  Scheme B denotes the procedure of obtaining numerical solution in the scaled domain $(0,1)$~\cite{breward_2002}. Two test cases are considered in the numerical experiments.

In the first case, the cell velocity $u_c$ and the oxygen tension $C$ are assumed to be unity. In this case \eqref{eqn:vol_fraction} reduces to a semi-linear advection equation which can be solved analytically by the method of characteristics. The analytical solution is compared with the numerical solutions. We also study the influence of $\alpha_{\text{thr}}$ on locating the tumour frontier. In the second case, we compare the approximate solutions of the full system \eqref{eqn:vol_fraction}-\eqref{eqn:bd_velcoity}.

In all numerical tests the values of the parameters are set to be $s_1 = 10 = s_4,\,s_2 = 0.5 = s_3$, $k = 1 = \mu$, $Q = 0.5$, $\hat{Q}_1 = 0$ and $\ell(0) = 1$ to preserve conformity with Breward \emph{et al}.~\cite{breward_2002}.
\subsection*{Case 1}
The analytical solution to~\eqref{eqn:vol_fraction} in the case where $C = u_c = 1$ is:
\begin{equation}
\alpha(t,x) = \dfrac{(c_2 - c_1) \alpha_0(x - t)\exp((c_1 - c_2)t)}{c_1 \alpha_0(x - t) \left(1 -\exp((c_1 - c_2)t)\right) + c_2 - c_1}
\end{equation}
where $c_1 = 1$ and $c_2 = \frac{s_2 + s_3}{1 + s_4}$. The initial data considered are~(i) $\alpha_0(t,x) = 0.5\left(0.02 + \cos^2\left(x\right)\right) \chi_{[0,1]}$~(ii) $\alpha_0(t,x) = 0.5\left(0.02 + \sin^2\left(x\right)\right) \chi_{[0,1]}$ and~(iii) $\alpha_0(t,x) = \frac{\chi_{[0,1]}}{2}\frac{1 + \exp(x - 0.5)^2}{1 + \exp(2(x - 0.5)^2)}$, where $\chi_{[0,1]} = 1$ in $[0,1]$ and $0$ otherwise. Here $T=5,\,L = 6,\,\Delta t = 0.01,\,\Delta x = 0.02,\,\alpha_{\text{thr}} = 0.04$ (method U) and $\alpha_{\text{thr}} = 0.004$ (method M). The reduction in the numerical diffusion from former to latter method explains the reduction in the threshold value.

\begin{figure}
\centering
\includegraphics[scale=0.07]{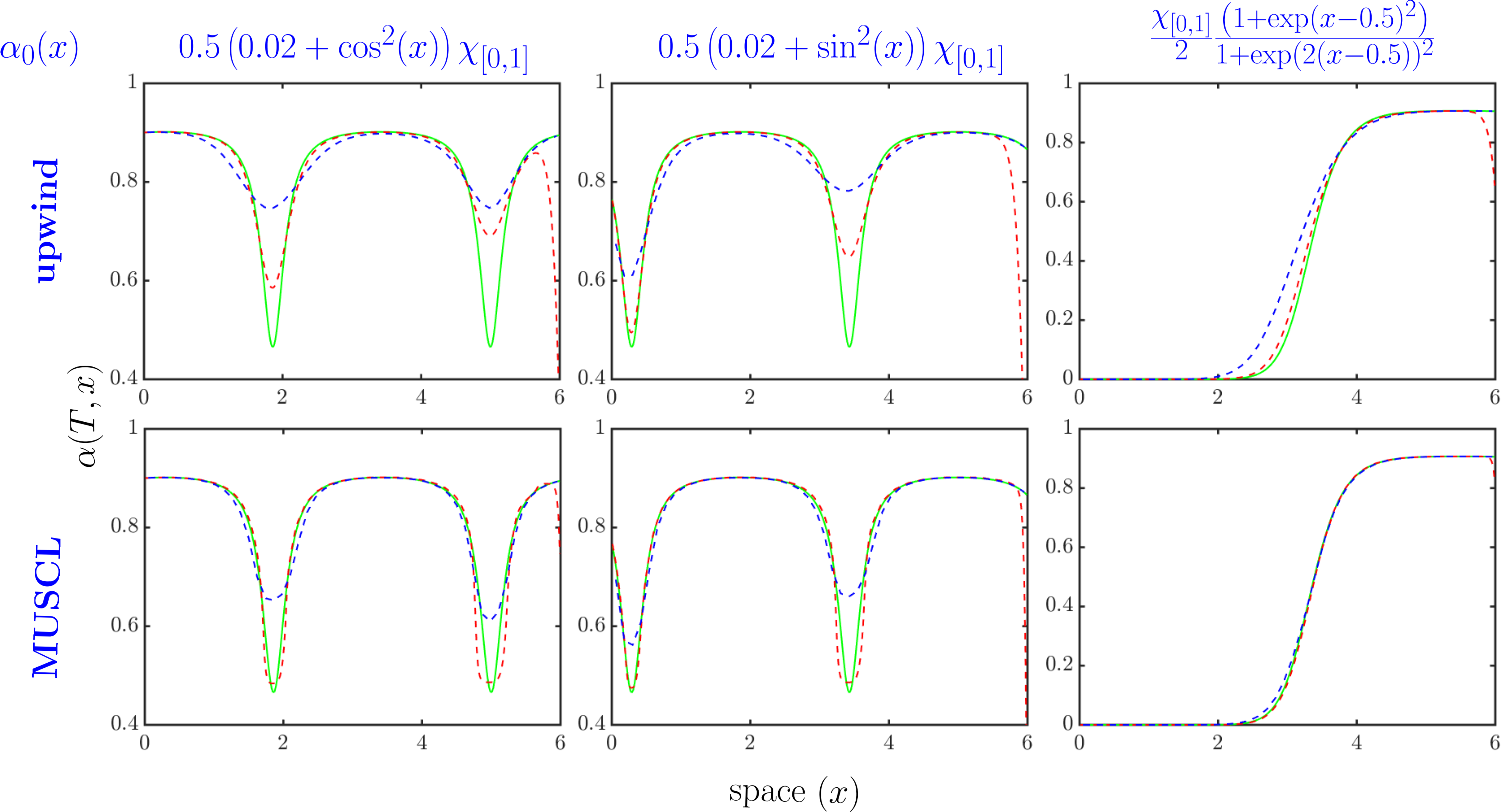}
\caption{Case 1: scheme A - red, scheme B - blue, analytical solution - green. Upper row - method U, lower row - method M.}
\label{figure:case1}
\end{figure}

Approximate solution obtained in the extended domain captures the properties of the analytical solution better than the one obtained in the scaled domain (Figure~\ref{figure:case1}), though it is less accurate towards the discontinuity at $\ell_h^j$ in method U owing to high diffusion. But method M overcomes this disadvantage; the extended solution agrees well with the scaled solution towards the discontinuity, and remarkably better in the interior region (Figure~\ref{figure:case1}). The recovered radius, on the other hand, is in excellent agreement with the exact radius for both method M and U with a proper choice of the threshold value  (Table~\ref{table:radius_threshold_muscl}).

We conclude this section by analysing the dependency of the recovered radius on the threshold value $\alpha_{\text{thr}}$ for the MUSCL method. The relative error, $\Delta \ell_h  = \frac{|\ell(T) - \ell_h^N|}{\ell(T)}$, at $T=5$ is used as a quantification of the error in the recovered radius. Two sets of experiments are conducted; (a) varying $\alpha_{\text{thr}}$ at a fixed $\Delta x$ (b) varying $\Delta x$ at a fixed $\alpha_{\text{thr}}$. Table~\ref{table:radius_threshold_muscl} shows that there exists a wide range of $\alpha_{\text{thr}}$ and $\Delta x$ for which the error remains below 1$\operatorname{E}-2$. This assures the accuracy of the method while the selection of $\alpha_{\text{thr}}$ remains a pertinent problem. 
\begin{table}[]
\centering
\begin{tabular}{|c||c|c|c|c|c|}
\hline
\multirow{2}{*}{\textbf{\color{blue}$\Delta x$}} & \multicolumn{5}{c|}{\color{red} $\alpha_{\text{thr}}$} \\ \cline{2-6} 
                    & $\textcolor{red}{0.01}$  & $\textcolor{red}{0.008}$  & $\textcolor{red}{0.006}$  & $\textcolor{red}{0.004}$  & $\textcolor{red}{0.002}$ \\ \hline \hline 
\textcolor{blue}{0.01}                & 1.67$\operatorname{E}-3$  & 1.67$\operatorname{E}-3$   & 1.67$\operatorname{E}-3$   & 1.67$\operatorname{E}-3$   & 5.00$\operatorname{E}-3$ \\ \hline
\textcolor{blue}{0.02}                & 3.33$\operatorname{E}-3$  & 3.33$\operatorname{E}-3$  & 6.67$\operatorname{E}-3$   & 1.33$\operatorname{E}-2$ & 2.00$\operatorname{E}-2$  \\ \hline
\textcolor{blue}{0.04}             & 6.67$\operatorname{E}-3$  & 6.67$\operatorname{E}-3$   & 2.00$\operatorname{E}-2$  & 2.67$\operatorname{E}-2$   & 4.00$\operatorname{E}-2$ \\ \hline
\textcolor{blue}{0.06}                & 4.31$\operatorname{E}-3$ & 1.58$\operatorname{E}-2$   & 2.59$\operatorname{E}-2$   & 4.60$\operatorname{E}-2$   & 6.61$\operatorname{E}-2$  \\ \hline
\textcolor{blue}{0.08}                & 2.10$\operatorname{E}-2$   & 7.66$\operatorname{E}-3$   & 1.92$\operatorname{E}-2$   & 3.26$\operatorname{E}-2$   & 5.93$\operatorname{E}-2$  \\ \hline
\textcolor{blue}{0.1}                 & 3.33$\operatorname{E}-2$  & 1.67$\operatorname{E}-2$    & 1.67$\operatorname{E}-2$   & 5.00$\operatorname{E}-2$   & 8.33$\operatorname{E}-2$  \\ \hline
\end{tabular}
\caption{$\Delta \ell_h$ for case 1, method M.}
\label{table:radius_threshold_muscl}
\end{table}
\begin{table}[]
\centering
\begin{tabular}{|c||c|c|c|c|}
\hline
{\color[HTML]{3166FF}}          & \multicolumn{4}{c|}{{\color[HTML]{FE0000} \textbf{$\alpha_{\text{thr}}$}}}                                                     \\ \cline{2-5} 
\multirow{-2}{*}{{\color{blue} \textbf{$\Delta x$}}} & {\color{red} $0.04$} & {\color{red} $0.03$} & {\color{red} $0.02$} & {\color{red} $0.01$} \\ \hline \hline
{\color{blue} 0.01}                                  & 3.33$\operatorname{E}-3$                        & 3.33$\operatorname{E}-3$                        & 1.66$\operatorname{E}-2$                        & 3.83$\operatorname{E}-2$                       \\ \hline
{\color{blue} 0.02}                                  & 3.33$\operatorname{E}-2$                        & 3.33$\operatorname{E}-3$                        & 1.33$\operatorname{E}-2$                        & 5.68$\operatorname{E}-2$                       \\ \hline
{\color{blue} 0.04}                                  & 1.20$\operatorname{E}-1$                       & 7.33$\operatorname{E}-2$                        & 6.66$\operatorname{E}-3$                        & 6.00$\operatorname{E}-2$                        \\ \hline
\end{tabular}
\caption{$\Delta \ell_h$ for case 1, method U.}
\label{table:radius_threshold_upwind}
\end{table}
The range of $\alpha_{\text{thr}}$ and $\Delta x$ for the error remains low is thin for method U, which is expected considering the high numerical diffusion associated with it.
\subsection*{Case 2}
\begin{figure}
\centering
\includegraphics[scale=0.07]{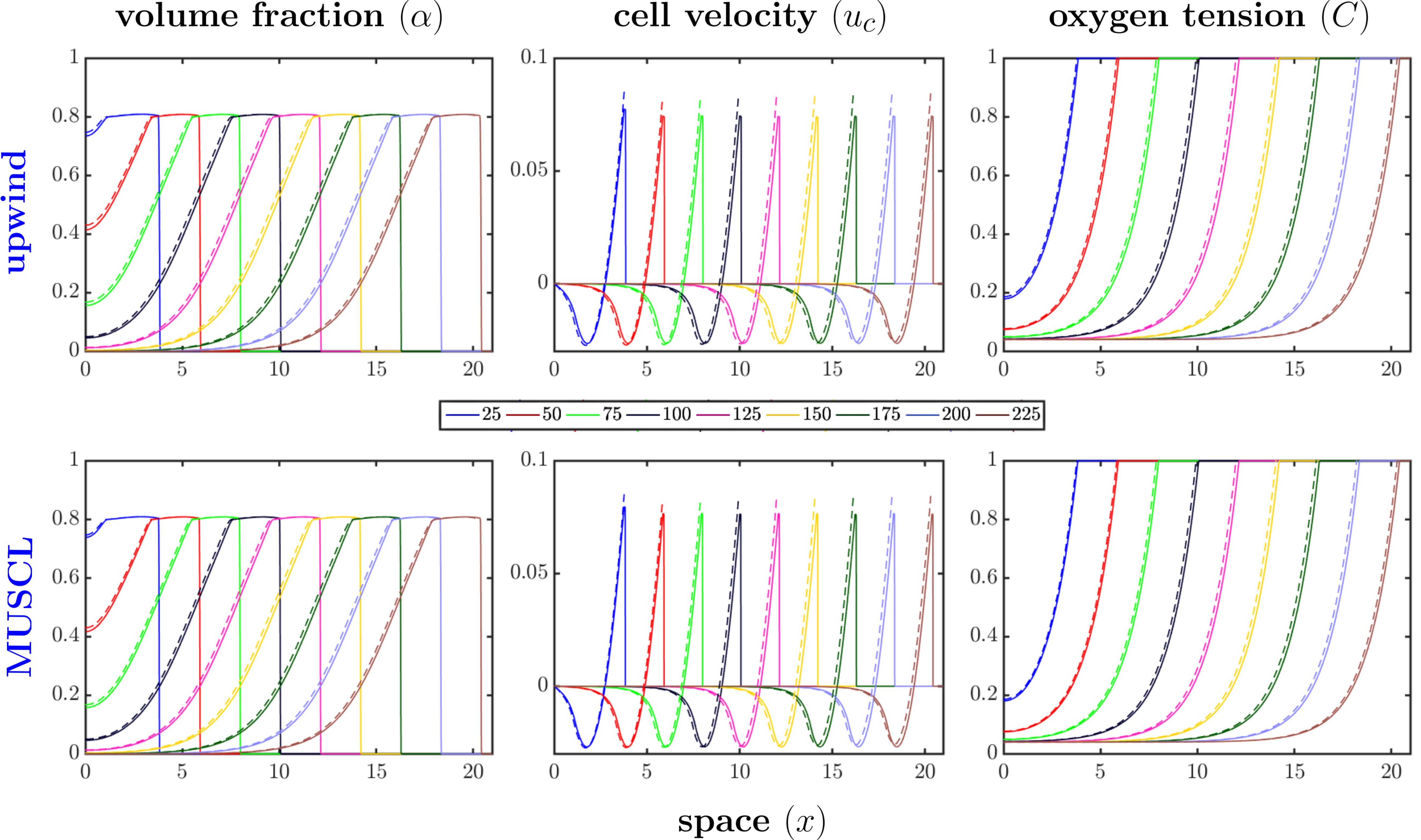}
\caption{Case 2: scheme A - solid lines, scheme B - dotted lines. Upper row - method U, lower row - method M. Each curve represents the variation of the corresponding variable with respect to space at fixed times $t=25,\,50,\cdots,225$.}
\label{fig:breward}
\end{figure}
The two phase model with all the system variables treated as unknowns is considered in this case. The parameters are chosen as $\Delta t = 0.01$, $\Delta x = 0.01$, $T = 228$, $L = 25$, $\alpha_{\text{thr}} = 0.004$ (method M) and $\alpha_{\text{thr}} = 0.01$ (method U) based on Tables~\ref{table:radius_threshold_muscl},~\ref{table:radius_threshold_upwind}. The initial condition is $\alpha_0(x) = 0.8$ for $0 \leq x\leq 1$ and $0$ otherwise. The moving boundary $\ell(t)$ is well captured by methods U and M. Since the exact value of $\ell(T)$ is not available, the error is quantified as the relative difference between scheme B and scheme A. The difference for method U is $6.18\operatorname{E}-3$ and method M is $5.69\operatorname{E}-3$. The numerical solution in the extended domain is in good agreement with the solution obtained from the scaled domain \cite{breward_2002}.

\section{Conclusion}
\label{sec:conclusion}
A novel numerical technique is developed to solve the two phase tumour growth problem and is tested against problems for which analytical solutions are known. For a fixed spatial mesh size the new method gives better solution than the standard method of solving in a scaled domain. The moving boundary is recovered from the numerical solution by comparing with a threshold value. It is found that an appreciable range of threshold values can be used along with higher order methods like MUSCL so that the error in the recovered radius can be kept low. The solution obtained by applying this technique shows very good agreement with solutions obtained using standard methods. This emphasises the reliability of the new method in extending it to solve tumour growth problems in higher dimensions while not solving for the boundary explicitly. 

\paragraph{Acknowledgement}
The author expresses gratitude towards A/Prof. J\'{e}r\^{o}me Droniou (Monash Univeristy), Dr. Jennifer Anne Flegg (Melbourne University) and Prof. Neela Nataraj (I.I.T. Bombay) for the valuable suggestions and help. 

    \bibliographystyle{plain}
    \bibliography{bibexport}

\end{document}